\newtheorem{theorem}{Theorem}
\newtheorem{lemma}{Lemma}
\newtheorem{corollary}{Corollary}
\begin{document}
\title{Hypercube orientations with only two in-degrees}
\author{Joe Buhler\thanks{Center for Communications Research, La Jolla, CA 92121 ({\tt buhler@ccrwest.org})} \and Steve Butler\thanks{Department of Mathematics, UCLA,
Los Angeles, CA 90095 ({\tt butler@math.ucla.edu}).\newline  This work was done with support of an NSF Mathematical Sciences Postdoctoral Fellowship.} \and Ron Graham\thanks{Department of Computer Science and Engineering, University of California, San Diego,
La Jolla, CA 92093 ({\tt graham@ucsd.edu}).} \and Eric Tressler\thanks{Department of Mathematics, University of California, San Diego,
La Jolla, CA 92093 ({\tt etressle@math.ucsd.edu}).}}
\date{\empty}
\maketitle

\begin{abstract}
We consider the problem of orienting the edges of the $n$-dimensional hypercube so only two different in-degrees $a$ and $b$ occur.  We show that this can be done, for two specified in-degrees, if and only if an obvious necessary condition holds.  Namely, there exist non-negative integers $s$ and $t$ so that $s+t=2^n$ and $as+bt=n2^{n-1}$.  This is connected to a question arising from constructing a strategy for a ``hat puzzle.''
\end{abstract}

\bigskip

\noindent{\bf AMS 2010 subject classification:}~~05C20; 00A08; 94B05

\bigskip

\noindent{\bf Keywords:}~~hypercube; orientations; Hamming balls; hat guessing strategies

\bigskip

\section{Introduction}\label{sec:introduction}
For our purposes, the $n$-dimensional hypercube, or $n$-cube, is a graph whose vertices are binary $n$-tuples, with edges joining two vertices that differ in exactly one coordinate, i.e., in the language of error-correcting codes, they have Hamming distance one.  An orientation of the $n$-cube specifies a head and tail for each edge, and the in-degree of a vertex is the number of incoming edges at that vertex.  If an orientation has only two different in-degrees, say $s$ vertices of in-degree~$a$ and $t$ vertices of in-degree~$b$, then there are
\[
s+t = 2^n
\]
vertices and
\[
as+bt = n 2^{n-1}
\]
edges (by counting edge heads).  Our main result, proved by modifying suitable Hamming codes, is that these obvious necessary conditions for the existence of an orientation with in-degrees $a$ and~$b$ --- non-negative integers $s$ and~$t$ satisfying these equations --- is in fact sufficient.

% It's important to avoid the impression that H. Iwasawa is the famous number theorist K. Iwasawa! I don't know whether they are related.  JPB
This question was implicitly raised by H.~Iwasawa \cite{iwasawa} in a different context --- finding strategies for specific hat guessing games that he introduced.  There are many puzzles involving hats or ``hat guessing games''.  Most of them have the following set-up:  A team of $n$ players has an initial strategy session where they decide on their joint strategy.  Later, a referee (or adversary) places hats colored either 0 or 1 on the players' heads, and the players can see all hat colors except that of their own hat.  Shortly thereafter, the players must simultaneously guess their hat color, with no communication between the players, or knowledge of other guesses, allowed.  A game is specified by the goal that the players try to achieve; if they are successful they each win one million dollars.  Examples include: (1) the hat placement is uniformly random and the players want to maximize the probability that all guesses are correct, (2) the players want to guarantee that at least $\lfloor n/2 \rfloor$ guesses are correct, or (3) the players want to guarantee that either all players are right or all are wrong.  Iwasawa generalizes variant (3) by asking that either exactly $a$ players are correct, or $b$ players are correct.

At the initial strategy session the first thing that the players might do is to number themselves from 1 to~$n$.  A placement of the $n$ hats can then be identified with a binary $n$-tuple, which we will think of as a vertex of the $n$-cube. When a player sees all hat colors but her own, she knows that the placement is one of the two (adjacent) vertices of the $n$-cube. A {\em strategy} is a rule that tells each player which vertex to choose.  Thus a (deterministic) strategy, to be agreed on at the team's initial strategy session is an {\em orientation} of the $n$-cube.  In other words, players agree that if after the hats are placed, they are ``on'' an edge, then they will guess as if the actual hat placement corresponds to the vertex pointed to by the arrow on that edge in the agreed-upon strategy.  For example the edge $10010\rightarrow11010$ corresponds to the situation when the second player sees $1$, $0$, $1$ and $0$ on the first, third, fourth and fifth players respectively.  The orientation of the edge indicates that the second player will predict that her hat color is~1.

If the hat placement corresponds to a vertex~$v$, then the number of correct guesses is the in-degree of~$v$.  So Iwasawa's problem of finding strategies that guarantee that {\bf either} $a$ answers are correct {\bf or} $b$ answers are correct is equivalent to the problem of finding an orientation of the hypercube where each vertex has in-degree either equal to $a$ or to~$b$.

For example, consider the special case mentioned above: $a = 0$ and $b = n$, i.e., where the goal is that everyone guesses right or everyone guesses wrong.  In this case there is an easy ``checkerboard'' winning strategy based on the fact that the hypercube is a bipartite graph.   If $v$ is a bit string $v_1v_2 \ldots v_n$,  $v_i \in \{0,1\}$, then let $P(v) \in \{0,1\}$ be the parity of the sum of the bits, i.e., $P(v) := v_1 + \cdots + v_n \bmod 2$.  All edges connect vertices of opposite parity.  So one winning strategy is to orient every edge towards, say, its endpoint with even parity.  The reader might enjoy showing that this strategy is essentially unique.

\bigskip

In the sequel $a, b, n$, and $P(v)$ will be as above.  We let $[a,b]_n$ be shorthand for the problem of realizing an orientation on the $n$-cube whose only in-degrees are $a$ and $b$.  If there are $s$ vertices with in-degree~$a$ and $t$ vertices with in-degree~$b$ then the equations above, $s+t = 2^n$, and $as+bt = n2^{n-1}$, must obviously hold.  Our main theorem says that these necessary conditions for the existence of the desired orientation are also sufficient.

\begin{theorem}\label{thm:main}
If $n$ is a positive integer, and $a$ and $b$ are between 0 and~$n$, then
there is an orientation realizing $[a,b]_n$ if and only if there are non-negative integers $s$ and $t$ so that $s+t=2^n$ and $as+bt=n2^{n-1}$.
\end{theorem}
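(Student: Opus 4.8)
The plan is to isolate the right invariants, reduce to a clean base family, and then build that base family from codes. First I would record what the necessary conditions force: eliminating $s$ and $t$ from $s+t=2^n$ and $as+bt=n2^{n-1}$ gives (when $a\neq b$) $t=2^{n-1}(n-2a)/(b-a)$, and non-negativity of $s,t$ forces $a\le n/2\le b$. Thus the two in-degrees straddle the average $n/2$, and the quantity I would treat as the fundamental invariant is the \emph{gap} $g=b-a$.

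Two structural operations drive the reduction. The first is \emph{edge-reversal duality}: reversing every arc turns an orientation with in-degrees $a,b$ into one with in-degrees $n-a,n-b$, so $[a,b]_n$ is realizable iff $[n-b,n-a]_n$ is. The second is a \emph{Cartesian product} of orientations: viewing $Q_{m+k}$ as $Q_m\times Q_k$ and orienting the two families of edges independently, the in-degree of $(u,w)$ is the sum of the in-degree of $u$ in $Q_m$ and of $w$ in $Q_k$. Taking one factor to be a balanced orientation of an even cube $Q_{2c}$ (uniform in-degree $c$, which exists since $Q_{2c}$ is $2c$-regular and hence Eulerian) shows that $[a,b]_n$ follows from $[a-c,b-c]_{n-2c}$. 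Both operations preserve $g$, so I would use them to push the smaller in-degree down to $0$: when $a+b\le n$ I reduce directly to $[0,g]_{N}$ with $N=n-2a$, and when $a+b\ge n$ I first dualize and then reduce, landing on $[0,g]_{N}$ with $N=2b-n$. A short check shows $g\le N$ and that the divisibility $g\mid N2^{N-1}$ is inherited, and when $g=N/2$ the base instance is just the balanced orientation of $Q_N$.

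This leaves the heart of the matter: realizing $[0,g]_N$ for $N/2\le g\le N$ with $g\mid N2^{N-1}$. Here I would take the in-degree-$0$ vertices (the \emph{sources}) to be an independent set $S$ arising from a linear code, so that every arc at a source points outward and each non-source $v$ receives exactly $m(v)$ arcs from $S$; the remaining task is to orient the edges inside $T=V\setminus S$ so that $v$ gains the additional $g-m(v)$ incoming arcs. A global count shows these residual demands automatically sum to the number of internal edges, so by an Eulerian/flow argument it suffices to make $m(v)$ uniform enough that $0\le g-m(v)\le \deg_T(v)$ and the Hakimi subset conditions hold. The clean model is the perfect Hamming code of length $N=2^r-1$: its codewords are pairwise non-adjacent, every non-codeword has a unique codeword neighbor (so $m(v)=1$), and the $(N-1)$-regular graph on non-codewords carries a uniform in-degree-$\tfrac{N-1}{2}$ orientation, yielding exactly $[0,(N+1)/2]_N$.

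The main obstacle I anticipate is precisely the base case for lengths that are not of the form $2^r-1$ and for gaps other than $(N+1)/2$: one must \emph{modify} the Hamming code---by shortening, puncturing, or adjoining cosets---to hit the prescribed source count $s=2^{N-1}(2g-N)/g$ while keeping $S$ independent and the covering numbers $m(v)$ in the feasible window, and then certify that the residual in-degree sequence on $T$ is realizable exactly when $g\mid N2^{N-1}$. I would expect to split into parity cases for $N$, to use the product construction once more to strip off balanced factors and thereby minimize the list of code gadgets that must be built by hand, and to verify the subset (flow) conditions using the high edge-connectivity and symmetry of the hypercube rather than checking them directly.
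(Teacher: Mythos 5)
Your reduction machinery is sound, and it is essentially dual to the paper's: you strip balanced even-cube factors to push the smaller in-degree down to $0$, while the paper pushes the larger in-degree up to $n$ (its Theorem~\ref{thm:bootstrap}(a),(c)); your inheritance checks ($g\le N$, $g\mid N2^{N-1}$, $g\ge N/2$) go through. But you are missing one of the paper's reductions, and that omission makes your base family far larger than it needs to be. The paper also has a \emph{lifting} operation (Theorem~\ref{thm:bootstrap}(d)): pulling back an orientation of the $n$-cube along the blockwise-parity map $\Psi(w_1\ldots w_n)=P(w_1)\ldots P(w_n)$ from the $kn$-cube multiplies both in-degrees by $k$, so $[a,b]_n$ realizable implies $[ka,kb]_{kn}$ realizable. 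This is what lets the paper divide out $\gcd(a,n)$ and shrink the base family to the primitive cases $[a,n]_n$ with $n$ odd and $n-a=2^k$ (equivalently, in your dual notation, $[0,g]_N$ with $g=2^k$ and $N$ odd, $2^k<N<2^{k+1}$). Without it, your terminal family includes cases such as $[0,6]_9$ (dual of $[3,9]_9$), where the gap is not a power of two: the paper disposes of this instance by lifting $[1,3]_3$ threefold, whereas no single modification of a Hamming code obviously produces it, and your stripping/duality moves cannot reduce it further.

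The more serious gap is that the heart of the theorem --- realizing the base cases --- is not actually proved. Your concrete construction covers only the perfect parameters $N=2^r-1$, $g=(N+1)/2$; for everything else you name the difficulty (``shortening, puncturing, or adjoining cosets,'' then Hakimi/flow conditions ``via edge-connectivity and symmetry'') without carrying it out, and this is precisely where all the work lies. The paper's Section~\ref{sec:hamming} shows what is needed: for primitive $[a,n]_n$ one writes the $n$-cube as a product of a $2m$-cube and an $n_0$-cube ($a=2m-1$, $n_0=2^k-1$), takes as sinks the even vertices lying over $H$ together with the even ``high'' vertices $(p,h\langle i\rangle)$, $i>a$, and then must verify that the forced orientations give every non-sink exactly the right count and that the residual graph on the ``low'' vertices is $2a$-regular (hence Eulerian). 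That verification rests on a genuinely nontrivial lemma about Hamming codes --- that for fixed $h\in H$ and $i$, the map sending $j$ to the coordinate $k$ with $h\langle i,j\rangle=h'\langle k\rangle$, $h'\in H$, is a permutation of $\{1,\ldots,n_0\}$ fixing $i$ --- and your framework would need an analogous statement to certify that your covering numbers $m(v)$ land in the feasible window and that the degree-constrained orientation of $T$ exists. Appealing to the hypercube's connectivity and symmetry does not substitute for this: the constraint is not just that a degree sequence is orientable, but that an \emph{independent} source set with exactly the prescribed cardinality and compatible covering multiplicities exists, which is a structural fact about codes, not a counting fact.
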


The case in which $n$ is even and $a$ or $b$ is $n/2$ is easy to work out (every vertex will have in-degree $n/2$, and an orientation can be obtained by orienting an Eulerian tour on the $n$-cube), and it is easy to verify that this means that $a = b$ or that one of $s,t$ is zero.  We regard this case as settled and from now on take $s$ and~$t$ to be positive, and $a < b$.

\section{Reduction to primitive orientations}\label{sec:reduce}
Solving the equations $s+t=2^n$ and $as+bt=n2^{n-1}$ for $s$ and $t$ gives
\begin{equation}\label{eq:st}
s=\frac{2^{n-1}(2b-n)}{b-a}\qquad\mbox{and}\qquad t=\frac{2^{n-1}(n-2a)}{b-a}.
\end{equation}
From this we see that $a < n/2 < b$ (also obvious by interpreting the equations as implying that the average in-degree has to be $n/2$).

The first few possible cases are tabulated below.
\[
\begin{array}{|l|l|}\hline
n=1&[0,1]_1\\ \hline
n=2&[0,2]_2\\ \hline
n=3&[0,~2]_3 ,~ [0,~3]_3 ,~  [1,~2]_3 ,~  [1,~3]_3 \\ \hline
n=4&[0,~4]_4 ,~  [1,~3]_4 \\ \hline
n=5&[0,~4]_5 ,~  [0,~5]_5 ,~  [1,~3]_5 ,~  [1,~4]_5 ,~  [1,~5]_5 ,~  [2,~3]_5 ,~  [2,~4]_5\\ \hline
n=6&[0,~4]_6,~  [0,~6],~  [1,~5]_6,~  [2,~4]_6,~  [2,~6]_6 \\ \hline
n=7&[0,4]_7,~   [0,7]_7,~   [1,5]_7,~   [1,6]_7,~   [2,4]_7,~   [2,5]_7,~   [2,6]_7,~   [3,4]_7,~   [3,5]_7,~   [3,7]_7\\ \hline
n=8&[0,8]_8,~   [1,5]_8,~   [1,7]_8,~   [2,6]_8,~   [3,5]_8,~   [3,7]_8\\ \hline
\end{array}
\]
The number of possible pairs of in-degrees for the $n$-cube grows large with $n$, for instance when $n=1000$ there are $3038$ possible pairs of in-degrees that satisfy the necessary conditions.  

The following theorem will allow us to reduce the number of cases that need to be considered explicitly by giving useful reductions.

\begin{theorem}\label{thm:bootstrap}
The following hold:
\begin{itemize}
\item[(a)] $[a,b]_n$ is realizable if and only if $[n-b,n-a]_n$ is realizable;
\item[(b)] $[a,n-a]_n$ is realizable for all $0\leq a\leq n$;
\item[(c)] if $[a,b]_n$ is realizable then $[a+1,b+1]_{n+2}$ is realizable; and
\item[(d)] if $[a,b]_n$ is realizable then $[ka,kb]_{kn}$ is realizable.
\end{itemize}
\end{theorem}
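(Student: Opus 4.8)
The plan is to handle the four parts with one symmetry argument and three product-type constructions of increasing subtlety. For part~(a) I would reverse every edge: since the $n$-cube is $n$-regular, the in-degree of a vertex in the reversed orientation equals its out-degree in the original, i.e.\ $n$ minus its original in-degree. Thus reversal sends a realization of $[a,b]_n$ to one of $[n-b,n-a]_n$, and as it is an involution it gives the ``if and only if.'' For part~(b) I would begin with the checkerboard orientation (every edge directed toward its even-parity endpoint), which realizes $[0,n]_n$, with the $2^{n-1}$ even vertices of in-degree $n$ and the odd vertices of in-degree $0$. The edges of the $n$-cube decompose into $n$ perfect matchings $M_1,\dots,M_n$, where $M_i$ collects the edges in direction $i$; I would then reverse the edges of $M_1,\dots,M_a$. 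Each vertex meets exactly one edge of each $M_i$, so reversing $a$ of these classes shifts every in-degree by exactly $a$: even vertices drop from $n$ to $n-a$ and odd vertices rise from $0$ to $a$, realizing $[a,n-a]_n$.

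For part~(c) I would view $Q_{n+2}$ as $Q_n\times Q_2$, i.e.\ as four copies of $Q_n$ indexed by the vertices of the $4$-cycle $Q_2$, together with the edges in the two new directions. Orient each of the four copies by the given realization of $[a,b]_n$, and for each fixed $v\in Q_n$ orient the $4$-cycle on the fiber $\{v\}\times Q_2$ as a consistently directed cycle. Since a directed $4$-cycle gives each of its vertices in-degree exactly $1$, every vertex of $Q_{n+2}$ gains exactly $1$ over its in-degree within its copy, producing in-degrees $a+1$ and $b+1$ as required.

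Part~(d) is where I expect the real obstacle to lie. The naive idea of orienting $Q_{kn}=Q_n\times\cdots\times Q_n$ block-by-block fails, because the in-degree of a vertex would be a sum of $k$ independent terms from $\{a,b\}$ and could take any intermediate value $ia+(k-i)b$. Instead I would introduce the XOR collapse $\pi\colon Q_{kn}\to Q_n$ that groups the $kn$ coordinates into $n$ blocks of size $k$ and sends each block to the parity of its $k$ bits. Because flipping a single bit flips exactly one block-parity, $\pi$ is a surjective graph homomorphism sending every edge of $Q_{kn}$ to an edge of $Q_n$. I would then lift the given orientation along $\pi$, orienting each edge of $Q_{kn}$ toward whichever endpoint maps to the head of its image edge. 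The key point is that the $k$ edges of $Q_{kn}$ lying over one edge of $Q_n$ all share that image, so at a vertex $x$ they point toward $x$ either all together or not at all, according to whether the image edge points toward $\pi(x)$; hence the in-degree of $x$ equals $k$ times the in-degree of $\pi(x)$, which lies in $\{ka,kb\}$. The step needing care is checking that this lifting rule is well defined, namely that the head-assignment of a lifted edge depends only on its image and not on which of the $k$ lifts is chosen; this holds because all $k$ lifts in a block share the same image edge and the same value of $\pi(x)$ in that coordinate. A routine count of vertices and edge-heads in each construction confirms the necessary conditions are met throughout.
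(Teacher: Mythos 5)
Your proposal is correct and follows essentially the same route as the paper on all four parts: edge reversal for (a), the parity-plus-coordinate-split orientation for (b) (your ``reverse $a$ of the $n$ direction-matchings of the checkerboard'' is the same construction, stated differently), the product with a directed $4$-cycle for (c), and the block-parity collapse $Q_{kn}\to Q_n$ with lifted orientations for (d). No gaps; nothing further needed.
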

\begin{proof}
For part (a) we simply note that if we have an orientation for $[a,b]_n$ then reversing all the edges gives an orientation for $[n-b,n-a]_n$ and vice versa.

For part (b) we produce an orientation by generalizing the $[0,n]_n$ strategy given above.  An edge joins vertices $v$ and $v'$ with $P(v) = 0$ and $P(v') = 1$, and $v'$ is obtained from $v$ by flipping the $i$-th coordinate for some~$i$.  We orient the edge towards $v$ if $1 \le i \le a$ and towards $v'$ if $a < i \le n$.  One checks that the in-degree of a vertex with $P(v) = 0$ is $a$, and is otherwise $n-a$.

For part (c) we note that the $(n+2)$-cube is the product of the $n$-cube with the four cycle (or $2$-cube).  Orient each copy of the $n$-cube as dictated by the orientation $[a,b]_n$.  The remain edges are a disjoint union of four-cycles and we orient each of those in (some) cyclic order.  At each vertex we have either $a$ or $b$ in-edges coming from the $n$-cube and $1$ in-edge from the four cycle, i.e., we have a solution to $[a+1,b+1]_{n+2}$.

Finally for part (d) suppose that we have an orientation for $[a,b]_n$.  Write vertices of the $kn$-cube in the form
\[
v=w_{1}w_{2}\ldots w_{n}
\]
where each $w_i$ is a vertex of the $k$-cube.  Define a map $\Psi$ from vertices of the $kn$-cube to vertices of the $n$-cube by $\Psi(v) = P(w_1)P(w_2)\ldots P(w_n)$. Note that if $u$ and $w$ are adjacent in the $kn$-cube then $\Psi(u)$ and $\Psi(w)$ are adjacent in the $n$-cube (in both cases they will differ in exactly one entry). Orient the edge between $u$ and $w$ according to the orientation between $\Psi(u)$ and $\Psi(w)$.  It is easy to check that each in-edge at a vertex in the $n$-cube gets lifted to exactly $k$ in-edges to a corresponding vertex in the $kn$-cube.  So if the in-degrees were originally $a$ and $b$ they become $ka$ and $kb$, as desired.
\end{proof}

From Theorem~\ref{thm:bootstrap} we see once we have an orientation $[a,b]_n$ that we immediately get many orientations, i.e., $[a+1,b+1]_{n+2}$, $[a+2,b+2]_{n+4}$, $[2a,2b]_{2n}$, $[3a,3b]_{3n}$, $[3a+1,3b+1]_{3n+2}$, and so on.  With a little work it is easy to tabulate the first few cases that do not seem to be reducible any further; the first few are 
\[
[0,1]_1,~[1,3]_3,~[1,5]_5,~[3,7]_7,~[1,9]_9,~[3,11]_{11},~[5,13]_{13},~[7,15]_{15}.
\]

This appears to indicate that for each odd $n$ there is a unique $a<n/2$ such that $[a,n]_n$ is not reducible using any of the results in Theorem~\ref{thm:bootstrap}, and that $a$ can be obtained from $n$ by removing the most significant bit in its binary expansion.  This is the content of the following Corollary

\begin{corollary}\label{thm:prime}
It suffices to prove Theorem~\ref{thm:main} for $[a,n]_n$ for odd $n$ where $2^k<n<2^{k+1}$ and $a = n-2^k$.
\end{corollary}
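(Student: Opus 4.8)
The plan is to use the operations of Theorem~\ref{thm:bootstrap} in reverse to carry an \emph{arbitrary} admissible pair — one for which the required non-negative integers $s,t$ exist — down to one of the listed primitive cases, and then run the same operations forward to transfer realizability back to the original. Since the balanced cases $a+b=n$ are realizable outright by part~(b), I may assume $a+b\neq n$ from the start.

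First I would reduce to \emph{top-degree form}, meaning $b=n$. Reading part~(c) backwards, realizability of $[a,b]_n$ follows from that of $[a-1,b-1]_{n-2}$ whenever $1\le a$ and $b\le n-1$, and each such step lowers the gap $n-b$ by one. Applying this repeatedly, the process stops either when $a=0$, giving $[0,b-a]_{n-2a}$, or when $b=n$; in the first case part~(a) converts $[0,b-a]_{n-2a}$ into the top-degree pair $[\,n-a-b,\ n-2a\,]_{n-2a}$, and in the second we already have top-degree form. A short check confirms that each backward use of~(c) keeps us admissible: since $s+t=2^n$ it suffices that $4\mid s$, i.e. that $v_2(b-a)\le n-3+v_2(2b-n)$, and the only way this could fail forces $b-a\ge 2^{n-2}$, which is incompatible with $b-a\le n-2$ in the range $1\le a,\ b\le n-1$. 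Thus we reach an admissible top-degree pair $[A,N]_N$ with $0<A<N/2$, the strict inequality $A>0$ coming from $a+b\neq n$.

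The crux is to identify exactly which top-degree pairs are admissible. Writing $d=\gcd(A,N)$, $A=dA_1$, $N=dN_1$, we have $\gcd(A_1,N_1)=1$ and hence $\gcd(N_1-A_1,N_1)=1$. Admissibility says $s=2^{N-1}N/(N-A)$ is an integer; dividing numerator and denominator by $d$ gives $(N_1-A_1)\mid 2^{N-1}N_1$, and since $N_1-A_1$ is coprime to $N_1$ this forces $N_1-A_1=2^k$, a single power of two. The inequalities $0<A_1<N_1/2$ then pin down $2^k<N_1<2^{k+1}$, so $2^k$ is the leading power of two below $N_1$ and $A_1=N_1-2^k$; meanwhile $\gcd(2^k,N_1)=\gcd(N_1-A_1,N_1)=1$ with $k\ge1$ forces $N_1$ odd. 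Hence $[A_1,N_1]_{N_1}$ is precisely a primitive of the asserted shape, and $[A,N]_N=[dA_1,dN_1]_{dN_1}$ is its image under part~(d).

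Finally I would assemble the implication. Assuming Theorem~\ref{thm:main} for the primitives, $[A_1,N_1]_{N_1}$ is realizable; part~(d) yields $[A,N]_N$, and then part~(a) together with the appropriate number of forward applications of part~(c) — exactly reversing the reduction of the second paragraph — yields the original $[a,b]_n$. I expect the main obstacle to be the number-theoretic step of the third paragraph: recognizing that coprimality of $A$ and $N$ is exactly what forces the gap $N-A$ to be a single power of two and $N$ to be odd, which is the structural reason the irreducible pairs are obtained by deleting the most significant bit of $N$. The bookkeeping verifying that the backward use of~(c) never leaves the admissible range, and that the forward operations reconstruct the original pair, is routine.
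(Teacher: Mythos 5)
Your proposal is correct and follows essentially the same route as the paper's proof: dispose of $a+b=n$ via part (b) of Theorem~\ref{thm:bootstrap}, use parts (a) and (c) to descend to a top-degree pair $[A,N]_N$ with $A>0$, then use part (d) and a coprimality argument to show the gcd-reduced pair has gap $N_1-A_1$ equal to a power of two and hence is primitive. The only cosmetic differences are that you apply the reversal (a) after the (c)-descent rather than before, you deduce $N_1-A_1=2^k$ from the integrality of $s$ alone (the paper uses the odd part of $n-a$ together with both $s$ and $t$), and you spell out the $2$-adic check that admissibility descends under (c), which the paper only asserts parenthetically.
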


We will say that orientations as in the corollary are {\em primitive}, i.e., orientations $[a,n]_n$ where $n$ is odd, $a < 2^k < n$, and $a+2^k = n$.  Vertices with in-degree equal to~$n$ will be said to be {\em sinks}.

\begin{proof}[Proof or Corollary~\ref{thm:prime}]
We only need to show how any $[a,b]_n$ can be derived from a primitive orientation.  

First we note that the result already holds for $a+b=n$ by part (b) of Theorem~\ref{thm:bootstrap}, and so without loss of generality we may assume that $a+b>n$ (if not reverse the orientation by part (a) of the Theorem).

If $b<n$ then by part (c) of Theorem~\ref{thm:bootstrap} we have $[a,b]_n$ can be found using $[a-1,b-1]_{n-2}$.  Repeating this we see that it suffices to produce orientations for cubes when $b=n$.  Now by part (d) of Theorem~\ref{thm:bootstrap} if $[a,n]_n$ has $\gcd(a,n)>1$ then we can divide out by the $\gcd$.  Therefore, it suffices to consider orientations $[a,n]_n$ with $\gcd(a,n)=1$.  (By using the equations \eqref{eq:st} we note that in both methods of reduction that if $s$ and $t$ were positive integers for $[a,b]_n$ then they also are positive integers for $[a',b']_{n'}$.)

Let $q$ denote the odd part of $n-a$.  Then examining the equations \eqref{eq:st} we must have that $q \,\big|\, n$ and $q\,\big|\,(n-2a)$ but this implies that $q\,\big|\,\gcd (a,n)=1$.  Therefore we can conclude that $n-a=2^k$ or $a=n-2^k$ for some $k$.  This implies that $a$ and $n$ are odd (by their coprimality), and the fact that $s$ and $t$ are positive imply that $ a < n/2$ so that the orientation is primitive, as desired.
\end{proof}

\section{Using thickened Hamming balls}\label{sec:hamming}
To find primitive orientations, we will use perfect single error-correcting Hamming codes (see, e.g., \cite{hamming}).  They exist when the dimension is one less than a power of two: if $n = 2^k-1$ then there is a subset $H$ of the vertices of the $n$-cube such that every vertex is either in $H$, or adjacent to a unique element of~$H$.  In the terminology of error-correcting codes, the Hamming balls of radius~$1$ centered at elements of~$H$ are disjoint, and cover the $n$-cube.

An orientation for $[a,n]_n$ when $n = 2^k-1$, $a = n-2^{k-1} = (n-1)/2$ is easy to describe using such an~$H$.  Indeed, we let all elements of a Hamming code $H$ be sinks.  Each vertex not in $H$ is incident to exactly one edge in~$H$, and it must be oriented towards the sink.  If we erase the sinks and the edges incident to them, we have a graph where every vertex has degree $n-1 = 2^k-2$.  Since every vertex has even degree we can find an Eulerian tour (or union thereof).  If we orient those edges as we traverse those tours we get a directed graph where each vertex has in-degree (and out-degree) equal to $(n-1)/2 = a$ which, when combined with the vertices in~$H$ and edges incident on those vertices, gives the desired orientation.

To construct primitive orientations more generally, it is convenient to introduce some notation.  If $v$ is a bit vector then let $v\langle i \rangle$ denote the vector obtained from $v$ by flipping bit~$i$, let $v\langle i,j \rangle$ be shorthand for $(v\langle i \rangle)\langle j \rangle$, and similarly for more flipped bits.

If $H$ is a Hamming code in an $n$-cube, $n = 2^k-1$, then every vertex is either an element of~$H$, or is of the form $h\langle i \rangle$ for a unique $h$ in~$H$, and a unique coordinate~$i$, $1 \le i \le n$.  The following result will play a key role in the proof of Theorem~\ref{thm:main}.

\begin{lemma}
Let $H$ be a Hamming code in the $n$-cube, $n = 2^k-1$.  Fix $i$, $1 \le i \le n$, and $h$ in $H$.  Then the function 
\[ 
f(j) = \text{the unique } k \text{ such that } h\langle i,j \rangle = h' \langle k \rangle \text{ for some } h' \in H
\]
is a permutation of $\{1, \ldots, n \}$, with $i$ as a fixed point.
\end{lemma}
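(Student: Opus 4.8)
The plan is to lean entirely on the two defining features of a Hamming code: the radius-one Hamming balls about codewords partition the $n$-cube, and distinct codewords are at Hamming distance at least three. The input $j=i$ is degenerate and should be handled separately at the outset: here $h\langle i,i\rangle = h$ is itself a codeword rather than a distance-one neighbor of one, so the stated rule produces no $k$, and I would simply declare $f(i):=i$, which is precisely the asserted fixed point. All of the real content then lives in the values $f(j)$ for $j\neq i$.

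First I would check that $f$ is well defined on $\{1,\dots,n\}\setminus\{i\}$. For $j\neq i$ the vertex $v:=h\langle i,j\rangle$ differs from $h$ in the two coordinates $i$ and $j$, so it lies at Hamming distance two from $h$; in particular $v$ is not a codeword (that would violate minimum distance three) and is not in the ball about $h$. Since the radius-one balls cover the cube disjointly, $v$ lies in exactly one ball and is a non-center point of it, so there are a unique $h'\in H$ with $h'\neq h$ and a unique coordinate $k$ with $v=h'\langle k\rangle$; this makes $f(j):=k$ well defined.

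Next I would show $f$ sends $\{1,\dots,n\}\setminus\{i\}$ into itself and is injective there. To see $f(j)\neq i$: if $f(j)=i$ then $h\langle i,j\rangle=h'\langle i\rangle$, and flipping bit $i$ on both sides gives $h'=h\langle j\rangle$, a codeword at distance one from $h$, which is impossible. For injectivity, suppose $f(j)=f(j')=k$ with $j\neq j'$; writing $h\langle i,j\rangle=h'_j\langle k\rangle$ and $h\langle i,j'\rangle=h'_{j'}\langle k\rangle$, the two codewords must be distinct (else the two left-hand vertices agree, forcing $j=j'$), so $h'_j\langle k\rangle$ and $h'_{j'}\langle k\rangle$ are at distance $d(h'_j,h'_{j'})\geq 3$; but $h\langle i,j\rangle$ and $h\langle i,j'\rangle$ differ only in coordinates $j$ and $j'$, a distance of two, a contradiction. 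An injective self-map of the finite set $\{1,\dots,n\}\setminus\{i\}$ is a bijection, and together with $f(i)=i$ this makes $f$ a permutation of $\{1,\dots,n\}$ fixing $i$.

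I expect the injectivity step to be the crux: well-definedness and the fact that $f$ avoids $i$ are each one-line distance estimates, but the collision argument is where minimum distance three genuinely does the work, and it is the only place needing care. I would also note that for the standard linear Hamming code the whole argument collapses to linear algebra: with a parity-check matrix $M$ whose columns $c_1,\dots,c_n$ are the distinct nonzero binary vectors, one has $c_{f(j)}=c_i+c_j$, from which $f$ is visibly an injective involution on $\{1,\dots,n\}\setminus\{i\}$ and $f(i)=i$ is the degenerate syndrome $c_i+c_i=0$.
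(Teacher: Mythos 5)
Your proof is correct and follows essentially the same route as the paper's: well-definedness of $f(j)$ for $j \ne i$ via the perfect-code (disjoint ball) property, $f(j) \ne i$ by flipping bit $i$ to produce a forbidden codeword at distance one from $h$, and injectivity by showing a collision $f(j)=f(j')$ would force two distinct codewords too close together. The only cosmetic difference is in the collision step, where the paper writes $h_1\langle j_2\rangle = h\langle i,j_1,j_2,k\rangle = h_2\langle j_1\rangle$ and invokes ball-disjointness, while you observe that flipping the common bit $k$ preserves distance and compare $d\ge 3$ against the distance-two left-hand sides; these are the same argument in different clothing.
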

\begin{proof}
From the definition, $f(i) = i$.
If $j \ne i$ then $h \langle i,j \rangle$ is outside the Hamming ball of radius 1 centered at $h$, and is not in~$H$ (since the point $h \langle i \rangle$ lies in the Hamming balls centered at $h$ and $h\langle i,j \rangle$).  Therefore there are unique $h' \in H$ and $k$ such that $h' \ne h$ and $h\langle i,j \rangle = h'\langle k \rangle$.  Moreover, $k \ne i$ since this would imply that $h\langle j \rangle  = h'$.

Suppose that $h\langle i,j_1 \rangle = h_1\langle k \rangle$ and $h\langle i,j_2 \rangle = h_2\langle k \rangle$.  Then
\[
h_1\langle j_2 \rangle = h \langle i, j_1, j_2, k \rangle = h_2 \langle j_1 \rangle.
\]
By the defining properties of Hamming codes, this is possible only if $j_1 = j_2$ and $h_1 = h_2$, finishing the proof of the Lemma.
\end{proof}

Now we construct a primitive orientation $[a,n]_n$.  The key idea is to ``thicken'' Hamming balls coming from smaller dimensions.  Choose $k$ such that $2^{k}<n<2^{k+1}$, so that $n = a+2^{k}$.  Let $a = 2m-1$, and $n_0 = 2^k-1$.  Choose a Hamming code $H$ on the $n_0$-cube.
Write vertices of the $n$-cube in the form
\[
(p,v)
\]
where $p$ is on the $2m$-cube and $v$ is on the $n_0$ cube; note that $2m+n_0 = a+2^k = n$.  To describe the desired orientation we proceed as above by specifying the sinks, noting which edges have orientations forced by the location of the sinks, and then finding the remaining edge orientations by noting the existence of suitable Euler tours (see Figure~\ref{fig:steps}).

\newcommand{\HE}{$H^+$}
\newcommand{\lowE}{$\text{Low}^+$}
\newcommand{\highE}{$\text{High}^+$}
\newcommand{\HO}{$H^-$}
\newcommand{\lowO}{$\text{Low}^-$}
\newcommand{\highO}{$\text{High}^-$}

Vertices $(p,v)$ on the $n$-cube are either even, which is equivalent to $P(p) = P(v)$, or odd, which is equivalent to $P(p) \ne P(v)$.  In addition, we say that a vertex $(p,v)$ is an $H$-vertex if $v$ is in~$H$, a {\em low} vertex if $v = h \langle i \rangle$ for $h \in H$ and $1 \le i \le a$, and a {\em high} vertex if $v = h\langle i \rangle$ for $a < i \le n_0$.  Thus there are three kinds of even vertices, which we will denote \HE, \lowE, and \highE, and three kinds of odd vertices, written \HO, \lowO, and \highO.

\begin{figure}[hftb]
\centering
\includegraphics{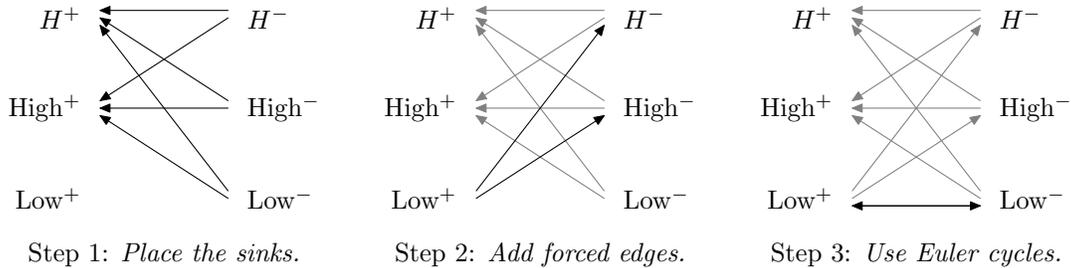}
\caption{The steps used to create the orientation for primitive $[a,n]_n$.}
\label{fig:steps}
\end{figure}

The sinks of our orientation will be the union of the \HE and \highE\ vertices.  (Note that there are no edges between these vertices.)  All edges incident on these vertices are of course oriented towards them, and they have in-degree~$n$.

Let $(p,h)$ be an \HO\ vertex.  Then $(p,h)$ is adjacent to $2m$ of the \HE\ vertices (flip a bit of~$p$).  In addition, $(p,h)$ is adjacent to  $a$ vertices of type \lowE, and $a' := n_0-a$ vertices of type \highE.  The vertices to sinks are forced, and since the in-degree of $(p,h)$ must be~$a$ we conclude that all $a$ edges going from a \lowE\ vertex to $(p,h)$ are oriented towards~$(p,h)$.

Now let $(p,h\langle i\rangle)$ be a \highO\ vertex.  Then $(p,h\langle i\rangle)$ is adjacent to a single \HO\ vertex (namely $(p,h)$) and $2m$ of the \highE\ vertices of the form $(p\langle j\rangle,h\langle i\rangle)$.  By the above lemma there are an additional $n_0-1$ neighbors of the form $(p,h'\langle \ell \rangle)$ of which $a$ of these are \lowE vertices and $n_0-1-a$ are \highE vertices.  The vertices to sinks are forced, and since the in-degree of $(p,h\langle i\rangle)$ must be~$a$ we conclude that all $a$ edges going from a \lowE\ vertex to $(p,h\langle i\rangle)$ are oriented towards~$(p,h\langle i\rangle)$.

Now consider the undirected subgraph of the $n$-cube obtained by removing all edges whose orientation has already been determined.  The only remaining edges are those which connect \lowE\ and \lowO\ vertices.  Suppose that $(p,h\langle i\rangle)$ is a \lowE\ vertex.  Then it will be adjacent to $a$ of the \lowO\ vertices of the form $(p\langle j\rangle,h\langle i\rangle)$ and by the lemma it will be adjacent to another $a$ of the \lowO\ vertices of the form $(p,h'\langle \ell\rangle)$.  A similar thing happens for the \lowO\ vertices.  In particular, in this undirected subgraph the non-isolated vertices all have degree $2a$.  Therefore by taking Euler cycle(s) we can direct these edges in the hypercube so that the in-degree of the \lowE\ and \lowO\ vertices are all $a$.  We have oriented the $n$-cube, using only in-degrees $n$ and~$a$, as desired.  This finishes the proof of Theorem~\ref{thm:main}.

To see a simple example of this constructed orientation we consider the $[1,5]_5$ case, built by thickening the Hamming balls centered at $000$ and $111$ in the 3-cube.  This is illustrated in Figure~\ref{fig:5a} (the sources are boxed and for simplicity we do not draw every possible edge). In Figure~\ref{fig:5b} we only give the in-degree $1$ vertices where we have added in all of the forced orientations, the remaining edges are easily oriented to give in-degree $1$ on all vertices.
% Should we label the vertices by type?  JPB
% We could, but it would it make messier!  SB

\begin{figure}[hftb]
\centering
\includegraphics{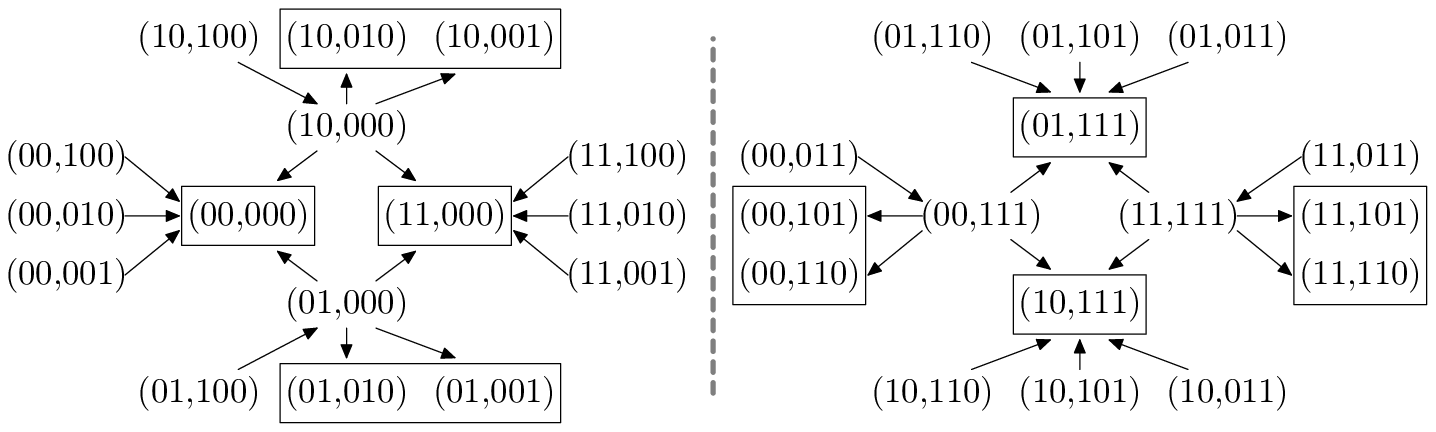}
\caption{The thickened Hamming balls for the $[1,5]_5$ case.}
\label{fig:5a}
\end{figure}

\begin{figure}[hftb]
\centering
\includegraphics{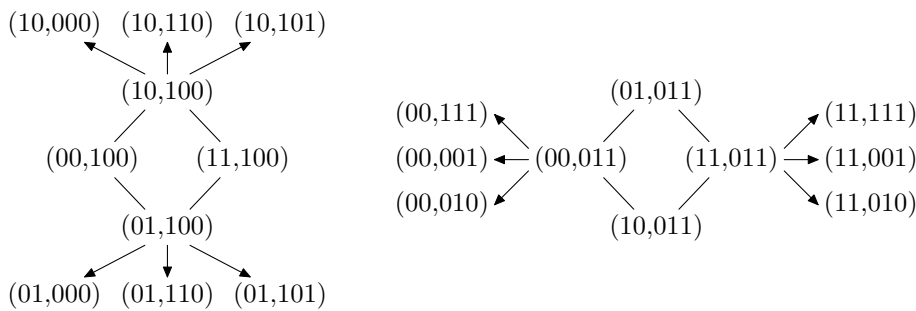}
\caption{The in-degree $1$ vertices for the $[1,5]_5$ case.}
\label{fig:5b}
\end{figure}

\section{Concluding remarks}\label{sec:conclusion}
We have looked at the problem of orienting the edges of the $n$-cube so that we have only two different in-degrees.  Our main result is that obvious necessary conditions from counting vertices and degrees are also sufficient.  In our construction we relied heavily on Hamming codes and so this argument does not extend to general graphs.  Neither does the result: consider the graph in Figure~\ref{fig:non} which has $8$ vertices and is regular of degree $3$.  Then having four vertices with in-degree $3$ and four vertices with in-degree $0$ satisfies the necessary conditions but it is easy to see that this is impossible and so it is not sufficient.  It would be interesting to know if there were any other general class of graphs for which the necessary conditions are also sufficient.

\begin{figure}[hftb]
\centering
\includegraphics[scale=2]{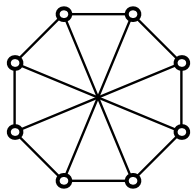}
\caption{An example of a graph where the necessary conditions are not sufficient.}
\label{fig:non}
\end{figure}

A natural generalization of the problem that we have considered is to show that we can orient the edges of the $n$-cube so that there are $s_i$ edges of in-degree $a_i$ for $i=1,\ldots,k$ if and only if $s_1+\cdots+s_k=2^n$ and $a_1s_1+\cdots+a_ks_k=n2^{n-1}$.  While our result shows this is true for $k=2$, this is false for $k=3$.  For example there is no way to orient the $4$-cube so that there are $7$ vertices with in-degree $0$, $2$ vertices with in-degree $2$, and $7$ vertices with in-degree $4$.  

One way to see this is to note that there is essentially only one way to pick seven vertices with in-degree $4$, i.e., picking vertices with Hamming weight $0$ and $2$.  This leaves only $4$ edges in the $4$-cube unoriented, namely the ones incident to the vertex with Hamming weight $4$ and there is no way to orient these $4$ edges to get $2$ vertices with in-degree $2$.

Another generalization is to increase the number of different colors of hats, i.e., to have $k$ different colors of hats.  This also has an interpretation graphically as a marking of an $n$-dimensional hyper-hypercube.  We let the vertices correspond to the $k^n$ possible placements of hats on the players,  edges again will consist of the possible $nk^{n-1}$ situations that a player can be in.  For instance if $k=3$ and $n=5$ then the edge $\{21001,21011,21021\}$, also denoted $210{*}1$ where the ${*}$ entry permutes through all possibilities, corresponds to the situation when the designated fourth player sees $2$, $1$, $0$ and $1$ on the designated first, second, third and fifth players.  To record the strategy in this situation we simply mark the vertex in the edge that corresponds to the guess the player will make.  Finally, observe that the number of correct guesses for a placement of hats corresponds to the number of times the corresponding vertex has been marked among all edges.  (The case $k=2$ reduces to what we previously considered where we mark the terminal vertex on each edge.)

In particular, a strategy that will produce either $a$ or $b$ correct guesses for any arbitrary placement of hats corresponds to marking the edges of the $n$-dimensional hyper-hypercube so that each vertex is marked either $a$ or $b$ times.  If we have $s$ vertices marked $a$ times and $t$ vertices marked $b$ times then we again have the obvious necessary condition that $s+t=k^n$ and that $as+bt=nk^{n-1}$.

Part of the difficulty of this variation is that some of our tools generalize while others do not.  For example, we can no longer ``reverse the orientation''.  However, some reductions still work.  As before let $[a,b]_n$ denote the problem of realizing a marking of the $n$-dimensional hyper-hypercube so each vertex is marked either $a$ or $b$ times.

\begin{theorem}\label{thm:bootstraph}
Given that we are working over a $k$-letter alphabet then
\begin{itemize}
\item[(a)] If $[a,b]_n$ is realizable then $[a+1,b+1]_{n+k}$ is realizable; and
\item[(b)] if $[a,b]_n$ is realizable then $[\ell a,\ell b]_{\ell n}$ is realizable.
\end{itemize}
\end{theorem}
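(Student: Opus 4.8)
The plan is to mimic the proofs of parts (c) and (d) of Theorem~\ref{thm:bootstrap}, replacing the binary parity map by a sum-mod-$k$ map and the cyclically oriented four-cycle by a suitable uniform marking of the $k$-dimensional cube over a $k$-letter alphabet.

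For part (a), the first step is to produce a single marking---call it $[1,1]_k$---of the $k$-dimensional hyper-hypercube in which every one of the $k^k$ vertices is marked exactly once. This is the analogue of the cyclically oriented four-cycle used in the binary case. I would identify the alphabet with $\mathbb{Z}_k$ and the $k$ coordinate directions with the residues $0,1,\dots,k-1$, and set $\sigma(v)=v_1+\cdots+v_k \bmod k$. For the edge in direction $i$ I mark the unique vertex $v$ on that edge with $\sigma(v)\equiv i$; this is well defined because along any edge a single coordinate sweeps through all of $\mathbb{Z}_k$, so $\sigma$ takes each residue exactly once. A vertex $v$ with $\sigma(v)=c$ is then marked by exactly the direction-$c$ edge through it and by no other, so each vertex is marked exactly once. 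With $[1,1]_k$ in hand, I would view the $(n+k)$-cube as the product of the $n$-cube with this $k$-cube: a vertex is a pair $(u,w)$, every edge either varies a coordinate of $u$ (with $w$ fixed) or a coordinate of $w$ (with $u$ fixed), and I mark the first kind using the given $[a,b]_n$ marking and the second kind using $[1,1]_k$. Since $(u,w)$ lies on $n$ edges of the first kind and $k$ of the second, its mark count is (the count of $u$, namely $a$ or $b$) plus (the count of $w$, namely $1$), giving $[a+1,b+1]_{n+k}$.

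For part (b), the plan is to generalize the lifting argument from part (d). Write a vertex of the $\ell n$-cube as $v=w_1w_2\cdots w_n$ with each block $w_i\in[k]^{\ell}$, and define $\Psi(v)=\tau(w_1)\cdots\tau(w_n)$, where $\tau(w)=w^{(1)}+\cdots+w^{(\ell)}\bmod k$ plays the role of the parity map. The key observation is that each edge $E$ of the $\ell n$-cube varies a single coordinate lying in one block $w_i$, and as that coordinate runs over the alphabet $\tau(w_i)$ runs over all of $\mathbb{Z}_k$; hence $\Psi$ restricts to a bijection from $E$ onto a single edge $\bar E_i$ of the base $n$-cube. I mark $E$ at the unique vertex whose $\Psi$-image is the marked vertex of $\bar E_i$. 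The point that makes the counting work is that, for a fixed block $i$, all $\ell$ edges through $v$ that vary a coordinate of $w_i$ map to the \emph{same} base edge $\bar E_i$; so $v$ is marked by all $\ell$ of them when $\Psi(v)$ is the marked vertex of $\bar E_i$, and by none of them otherwise. Summing over the $n$ blocks, the mark count of $v$ equals $\ell$ times the base mark count of $\Psi(v)$, which is $\ell a$ or $\ell b$, as desired.

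I expect the only genuinely new ingredient---and hence the main thing to get right---to be the construction of $[1,1]_k$ in part (a): in the binary case this was handed to us for free by the cyclically oriented four-cycle, whereas here I must exhibit a uniform single marking of a cube whose edges are $k$-cliques, which is exactly what the $\sigma$-based rule provides. Once that is established, the product construction in (a) and the sum-mod-$k$ lift in (b) are direct transcriptions of the binary arguments, and the remaining verifications are the routine incidence counts sketched above.
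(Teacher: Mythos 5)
Your proposal is correct and takes essentially the approach the paper intends: the paper gives no details at all (it says only that the proofs are ``similar to'' those of Theorem~\ref{thm:bootstrap}), and your two constructions --- the $[1,1]_k$ marking via $\sigma(v)=v_1+\cdots+v_k \bmod k$ generalizing the cyclically oriented four-cycle of part~(c), and the sum-mod-$k$ block lift generalizing the parity lift of part~(d) --- are exactly the natural transcriptions. Your verification that $\sigma$ and $\tau$ restrict to bijections on each edge is precisely the one new ingredient the $k$-ary setting requires, and it is carried out correctly.
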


The proofs are similar to the ones in Theorem~\ref{thm:bootstraph}. Using Theorem~\ref{thm:bootstraph} we can as before reduce to primitive orientations, the first few of which are listed below for $k=3,4,5,6,7$.

\[
\begin{array}{|c|l|}\hline
k=3&[0,1]_1,~[0,1]_2,~[0,3]_4,~[1,4]_4,~[0,3]_5,~[1,4]_5,~[0,3]_7,~[1,7]_7,~[0,3]_8 \\ \hline
k=4&[0,1]_1,~[0,1]_2,~[0,1]_3,~[0,2]_3,~[0,2]_5,~[0,4]_5,~[1,3]_5,~[1,5]_5,~[1,5]_6\\ \hline
k=5&[0,1]_1,~[0,1]_2,~[0,1]_3,~[0,1]_4,~[0,5]_6,~[1,6]_6,~[0,5]_7,~[1,6]_7,~[0,5]_8,~[1,6]_8\\ \hline
k=6&[0,1]_1,~[0,1]_2,~[0,1]_3,~[0,2]_3,~[0,1]_4,~[0,3]_4,~[0,1]_5,~[0,2]_5,~[0,3]_5,~[0,4]_5\\ \hline
k=7&[0,1]_1,~[0,1]_2,~[0,1]_3,~[0,1]_4,~[0,1]_5,~[0,1]_6,~[0,7]_8,~[1,8]_8,~[0,7]_9,~[1,8]_9\\ \hline
\end{array}
\]

A few of these are easy to check by hand (i.e., $[0,1]_1$ and $[0,1]_2$ for $k=3$).  By translating the problem of finding a marking into a {\sc sat} problem and then testing if the resulting expressions were satisfiable, we were also able to find markings for $[0,3]_4$ and $[1,4]_4$ for $k=3$ and for $[0,2]_3$ for $k=4$.  For the $[0,3]_4$ case there are $81$ vertices, $36$ are marked $3$ times each and $45$ are not marked at all.  Listed below is one way to mark the vertices with the edges to achieve $[0,3]_4$ for $k=3$, we have only listed the vertices which get marked and indicated which edges mark them by using ${*}$s, i.e., $\stackrel{*}2\stackrel{*}0\stackrel{ }2\stackrel{*}2$ indicates that $2022$ is marked by the edges ${*}022$, $2{*}22$ and $202{*}$.
\[
\begin{array}{c}
\stackrel{*}0\stackrel{*}0\stackrel{*}0\stackrel{ }0,~~
\stackrel{ }0\stackrel{*}0\stackrel{*}0\stackrel{*}2,~~
\stackrel{*}0\stackrel{*}0\stackrel{ }1\stackrel{*}0,~~
\stackrel{ }0\stackrel{*}0\stackrel{*}2\stackrel{*}1,~~
\stackrel{*}0\stackrel{*}1\stackrel{ }0\stackrel{*}1,~~
\stackrel{*}0\stackrel{*}1\stackrel{*}1\stackrel{}1,~~
\stackrel{*}0\stackrel{*}1\stackrel{ }1\stackrel{*}2,~~
\stackrel{*}0\stackrel{*}1\stackrel{*}2\stackrel{ }0,~~
\stackrel{ }0\stackrel{*}1\stackrel{*}2\stackrel{*}2,~~
\stackrel{*}0\stackrel{ }2\stackrel{*}0\stackrel{*}0,~~
\stackrel{*}0\stackrel{ }2\stackrel{*}1\stackrel{*}2,~~
\stackrel{*}0\stackrel{ }2\stackrel{*}2\stackrel{*}1,\\[3pt]
\stackrel{*}1\stackrel{*}0\stackrel{ }0\stackrel{*}1,~~
\stackrel{ }1\stackrel{*}0\stackrel{*}1\stackrel{*}2,~~
\stackrel{ }1\stackrel{*}0\stackrel{*}2\stackrel{*}0,~~
\stackrel{*}1\stackrel{*}0\stackrel{*}2\stackrel{ }1,~~
\stackrel{ }1\stackrel{*}1\stackrel{*}0\stackrel{*}2,~~
\stackrel{*}1\stackrel{*}1\stackrel{*}1\stackrel{ }0,~~
\stackrel{ }1\stackrel{*}1\stackrel{*}1\stackrel{*}1,~~
\stackrel{*}1\stackrel{*}1\stackrel{ }2\stackrel{*}2,~~
\stackrel{ }1\stackrel{*}2\stackrel{*}0\stackrel{*}0,~~
\stackrel{*}1\stackrel{ }2\stackrel{*}1\stackrel{*}1,~~
\stackrel{*}1\stackrel{ }2\stackrel{*}2\stackrel{*}2,~~
\stackrel{*}2\stackrel{*}0\stackrel{ }0\stackrel{*}2,\\[3pt]
\stackrel{*}2\stackrel{ }0\stackrel{*}1\stackrel{*}1,~~
\stackrel{*}2\stackrel{*}0\stackrel{*}1\stackrel{ }2,~~
\stackrel{*}2\stackrel{*}0\stackrel{*}2\stackrel{ }0,~~
\stackrel{*}2\stackrel{*}0\stackrel{ }2\stackrel{*}2,~~
\stackrel{*}2\stackrel{*}1\stackrel{*}0\stackrel{ }0,~~
\stackrel{*}2\stackrel{ }1\stackrel{*}0\stackrel{*}2,~~
\stackrel{ }2\stackrel{*}1\stackrel{*}1\stackrel{*}1,~~
\stackrel{*}2\stackrel{*}1\stackrel{ }2\stackrel{*}1,~~
\stackrel{*}2\stackrel{*}2\stackrel{*}0\stackrel{ }1,~~
\stackrel{*}2\stackrel{ }2\stackrel{*}0\stackrel{*}2,~~
\stackrel{*}2\stackrel{*}2\stackrel{ }1\stackrel{*}0,~~
\stackrel{*}2\stackrel{ }2\stackrel{*}2\stackrel{*}0\!.
\end{array}
\]

It may well be true that the corresponding statement of Theorem~\ref{thm:main} holds for all values of $k$ (in which case Theorem~\ref{thm:main} reduces to the $k=2$ case). For  values of $k\ge 3$ it will require some new ideas and approaches to establish the result or to search for counterexamples.


\begin{thebibliography}{99}
\bibitem{hats}
S.\ Butler, M.\ Hajiaghayi, R.\ Kleinberg, and T.\ Leighton, Hat guessing games, \emph{SIAM Review} {\bf 51} (2009), 399--413.

\bibitem{hamming}
T.\ K.\ Moon, \emph{Error Correction Coding: Mathematical Methods and Algorithms}, John Wiley \& Sons, Hoboken, 2005.

\bibitem{iwasawa}
H.\ Iwasawa, Presentation given at \emph{The Ninth Gathering 4 Gardner (G4G9)}, March 2010.

\end{thebibliography}
\end{document}